\newtheorem{theorem}{Theorem}[section]
\newtheorem{proposition}{Proposition}[section]
\newtheorem{lemma}{Lemma}[section]
\newtheorem{corollary}{Corollary}[section]
\numberwithin{equation}{section}
\title[Strong uniqueness of continuation]{Quantitative strong unique continuation for elliptic operators - application to an inverse spectral problem}
\author[Mourad Choulli]{Mourad Choulli}
\address{Universit\'e de Lorraine}
\email{mourad.choulli@univ-lorraine.fr}
\date{}
\begin{document}

\frenchspacing

\begin{abstract}
Based on the three-ball inequality and the doubling inequality established in \cite{MV}, we quantify the strong unique continuation established by Koch and Tataru \cite{KT} for elliptic operators with unbounded lower-order coefficients. We also obtain a quantitative strong unique continuation for eigenfunctions, which we use to prove that two Dirichlet-Laplace-Beltrami operators are gauge equivalent when their corresponding metrics coincide in the neighborhood of the boundary and their boundary spectral data coincide on a subset of positive measure.
 \end{abstract}

\subjclass[2010]{35B60, 35J15, 35R30}

\keywords{Elliptic operators, unbounded coefficients, strong unique continuation, three-ball inequality, doubling inequality, eigenfunctions, Laplace-Beltrami operator, gauge equivalent operators.}

\maketitle


\section{Quantitative strong unique continuation}

Let $\Omega$ be a Lipschitz domain of $\mathbb{R}^n$, $n\ge 3$, with boundary $\Gamma$, and $A=(a^{k\ell})$  be a symmetric matrix with Lipschitz continuous components so that
\begin{align*}
&\kappa ^{-1}|\xi| ^2\le A(x)\xi\cdot \xi \le \kappa|\xi|^2 \quad x\in \overline{\Omega} ,\; \xi\in \mathbb{R}^n,
\\
&| a^{k\ell}(x)-a^{k\ell}(y)|\le \varkappa |x-y|,\quad x,y\in \overline{\Omega},\; 1\le k,\ell\le n,
\end{align*}
where $\kappa >1$ and $\varkappa>0$ are constants.

Let $V\in L^{n/2}(\Omega;\mathbb{R})$ and $B,W\in L^p(\Omega,\mathbb{R}^n)$, where $p>n$ is fixed, and
\[
\mathscr{E}u:=\mathrm{div}(A\nabla u+uB)+W\cdot \nabla u +Vu.
\]
The following notation will be used in the remainder of this text.
\[
\Omega_\rho=\{x\in \Omega;\; \mathrm{dist}(x,\Gamma)>4\rho\},\quad \rho>0.
\]

We say that $u\in H^1(\Omega)$ satisfies $\mathscr{E}u=0$ in $\Omega$ if
\[
\int_\Omega [-(A\nabla u+uB)\cdot \nabla v+(W\cdot \nabla u+Vu)v]dx=0,\quad v\in C_0^\infty (\Omega).
\]
Recall that $x_0\in \Omega$ is a zero of infinite order of $u\in L_{\mathrm{loc}}^2(\Omega)$ if for all $N\in \mathbb{N}$ and a sufficiently small $r>0$ we have
\[
\int_{B(x_0,r)}u^2dx=O(r^N).
\]
Koch and Tataru \cite{KT} proved that  if $u\in H^1(\Omega)$ satisfies $\mathscr{E}u=0$ in $\Omega$ and has a zero of infinite order in $\Omega$, then $u=0$. In other words, all zeros of $u\in H^1(\Omega)\setminus\{0\}$ satisfying $\mathscr{E}u=0$ in $\Omega$ are of finite order. Our objective is then to quantify the order of the zeros of the solutions of $\mathscr{E}u=0$ in $\Omega$. Let $\Omega_0\Subset \Omega$. Let $\mathscr{C}$  be the set of all positive
constants that depend on $n$, $\Omega$, $\Omega_0$, $\kappa$, $\varkappa$, $B$, $W$ and $V$. When $\aleph$ is an arbitrary quantity, we write $\mathbf{c}\in \mathscr{C}_\aleph$ to denote that $\mathbf{c}$ is a positive constant depending on $n$, $\Omega$, $\Omega_0$, $\kappa$, $\varkappa$, $B$, $W$, $V$ and $\aleph$. Set
\[
\mathscr{S}_0=\left\{ u\in H^1(\Omega)\setminus\{0\};\;  \mathscr{E}u=0\; \mathrm{in}\; \Omega\right\}.
\]

\subsection{Statement of the main theorem}

Recall that, according to \cite[Corollary 4.3]{ER}, there exists $\rho_0=\rho_0(\Omega)>0$ such that $\Omega_\rho$ is connected for all $0<\rho<\rho_0$.

\begin{theorem}\label{theorem0}
There exist $\rho_\ast \in \mathscr{C}$, $0<\rho_\ast \le \rho_0$, $\tau\in \mathscr{C}$ with $\tau<1/4$ so that, for all $0<\rho <\rho_\ast$, we find $C<1$, $c>1$ and $\mathfrak{a}>1$ in $\mathscr{C}_\rho$ with the property that if $u\in \mathscr{S}_0$ then
\begin{equation}\label{1}
Cr^{c}\left(\frac{\|u\|_{L^2(\Omega_0)}}{\|u\|_{L^2(\Omega)}}\right)^{\mathfrak{b}_r}\le  \frac{\|u\|_{L^2(B(x,r))}}{\|u\|_{L^2(\Omega)}},\quad x\in \Omega_{\rho},\; 0<r<\tau \rho,
\end{equation}
where $\mathfrak{b}_r=\ln (\tau\rho/r)+\mathfrak{a}$.
\end{theorem}

With the assumptions and the notations of theorem \ref{theorem0}, if 
\[
q(u)=\left|\ln \left(\frac{\|u\|_{L^2(\Omega_0)}}{\|u\|_{L^2(\Omega)}}\right)\right|
\]
then \eqref{1} gives for $x\in \Omega_{\rho}$ and  $0<r<\tau \rho$
\[
\left[C(\tau \rho)^{-q(u)}e^{-\mathfrak{a}q(u)}\|u\|_{L^2(\Omega)}\right]r^{c+q(u)}\le \|u\|_{L^2(B(x,r))} .
\]
This inequality clearly quantifies strong unique continuation.

Davey and Zhu \cite{DZ} considered the case $B=0$, $W\in L^p$, $p>n$ and $V\in L^q$, $q>n/2$. The inequalities established in \cite{DZ} are with constants depending explicitly  on the norms of $W$ and $V$. The result in \cite{DZ} was improved in \cite{Da} in the case $B=0$, $W=0$ and $V\in L^q$, $q>n/2$.

As a by-product of theorem \ref{theorem0}, we obtain the following interpolation inequality.

\begin{corollary}\label{corollary0}
Let $\omega\Subset\Omega$. Then there exist $C>0$ and $0<t<1$ in $\mathscr{C}_\omega$ such that for any $u\in \mathscr{S}_0$ we have
\begin{equation}\label{ii0}
C\|u\|_{L^2(\Omega_0)}\le \|u\|_{L^2(\Omega)}^{1-t}\|u\|_{L^2(\omega)}^t.
\end{equation}
\end{corollary}

Henceforth, $\tau$ is as in Theorem \ref{theorem0}.

\subsection{Quantitative unique continuation from a set of positive measure}

Let
\[
\mathscr{S}_1=\left\{ u\in H^1(\Omega);\; \|u\|_{L^2(\Omega)}= 1\; \mathrm{and}\; \mathscr{E}u=0\; \mathrm{in}\; \Omega\right\}.
\]

\begin{proposition}\label{proposition1}
$($\cite[Proposition 4]{MV}$)$ Let $m>0$, $\rho_1>0$ and $0<\sigma<1/n$. There exists $\epsilon_0=\epsilon_0(m,\rho,\sigma)$ such that if $u\in \mathscr{S}_1$ and $E\subset \Omega_{\rho_1}$ verify $|E|> m$ and $\||E|^{-1/2}u\|_{L^2(E)}<\epsilon_0$ then we find $B(z,\tau\rho_1)$ with $z\in \Omega_{\rho_1}$ for which we have
\begin{equation}\label{2}
\|u\|_{L^2(B(z,\tau\rho_1))}\le |\ln \||E|^{-1/2}u\|_{L^2(E)}|^{-\zeta},
\end{equation}
where $\zeta\in \mathscr{C}$.
\end{proposition}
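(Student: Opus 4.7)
The plan is to combine a covering/pigeonhole argument on $\Omega_{\rho_1}$ with interior Hölder regularity for solutions of $\mathscr{E}u=0$ and the three-ball inequality of \cite{MV} that also underlies Theorem \ref{theorem0}. Since $\Omega_{\rho_1}$ is bounded and connected, the first step is to cover it by a finite family of balls $B(z_j,\tau\rho_1)$, $j=1,\ldots,N$, centered in $\Omega_{\rho_1}$, with cardinality $N=N(\Omega,\rho_1,\tau)$ controlled by geometry alone. The assumption $|E|>m$ together with pigeonhole then selects an index $j$ with $|E\cap B(z_j,\tau\rho_1)|\ge m/N$; call this ball $B_z:=B(z,\tau\rho_1)$, which will be the target ball in the conclusion.

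Writing $\eta:=\||E|^{-1/2}u\|_{L^2(E)}$, Chebyshev's inequality applied to $u^2$ on $E\cap B_z$ produces a subset $F\subset E\cap B_z$ with $|F|\ge m/(2N)$ on which $|u|\le C_0\eta$, for some constant $C_0$ depending on $m$, $N$ and $|\Omega|$. Since $u\in\mathscr{S}_1$, DeGiorgi-Nash-Moser theory provides an interior Hölder estimate $\|u\|_{C^{0,\alpha}(K)}\le C(K)$ for every compact $K\Subset\Omega$, with $\alpha\in(0,1)$ depending only on the data. Picking $x^*\in F$ and a radius $r\in(0,\tau\rho_1/2)$, one gets $|u(y)|\le C_0\eta+Cr^{\alpha}$ on $B(x^*,r)$, and hence
\[
\|u\|_{L^2(B(x^*,r))}\le C(\eta+r^{\alpha})\,r^{n/2}.
\]
The parameter $\sigma\in(0,1/n)$ in the statement enters through the calibration $r\simeq \eta^{\sigma}$, the restriction $n\sigma<1$ being used precisely to balance these two error terms while retaining a useful $L^2$-smallness of $u$ on $B(x^*,r)$.

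To transfer this smallness from the small ball $B(x^*,r)$ to the fixed ball $B_z$, the final step is to iterate the three-ball inequality of \cite{MV} along a chain of concentric triples contained in $\Omega_{\rho_1/2}$ joining $x^*$ to $z$, whose existence is guaranteed by the connectedness of $\Omega_{\rho_1}$. One obtains an estimate of the shape
\[
\|u\|_{L^2(B_z)}\le C\,\|u\|_{L^2(B(x^*,r))}^{\theta^k}\,\|u\|_{L^2(\Omega)}^{1-\theta^k}
\]
with $\theta\in(0,1)$ and $k$ the number of links in the chain, so the normalization $\|u\|_{L^2(\Omega)}=1$ eliminates the second factor. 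The hard part will be the bookkeeping here: since $k$ itself grows like $|\log r|\simeq |\log\eta|$, the effective exponent $\theta^k$ decays like a positive power of $\eta$, and one must carefully combine this decay with the $\eta^{\sigma}$-scaling of the initial smallness on $B(x^*,r)$ in order to produce the logarithmic modulus $|\log\eta|^{-\zeta}$ of the statement rather than a weaker or vacuous bound. The threshold $\epsilon_0(m,\rho_1,\sigma)$ is then chosen so that $r\simeq\eta^{\sigma}$ is small enough for the chain to fit inside $\Omega_{\rho_1/2}$ and for every interpolation step to remain in its range of validity.
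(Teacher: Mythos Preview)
This proposition is not proved in the paper; it is quoted from \cite[Proposition~4]{MV} and used as a black box. There is therefore no in-paper argument to compare against, and your sketch must be judged on its own.

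Your outline has the right architecture (covering, pigeonhole, local regularity, propagation via three-ball chains), but the step you yourself flag as ``the hard part'' is a genuine gap, not bookkeeping. With $r\simeq\eta^{\sigma}$ the chain from $B(x^\ast,r)$ to the fixed ball $B_z$ has length $k\simeq\sigma|\log\eta|$, so $\theta^{k}=\eta^{c}$ for some $c>0$. Your estimate then reads
\[
\|u\|_{L^2(B_z)}\le C\bigl(\eta^{c'}\bigr)^{\eta^{c}}=C\exp\!\bigl(-c'\,\eta^{c}\,|\log\eta|\bigr)\longrightarrow C\qquad\text{as }\eta\to 0,
\]
which is vacuous. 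No rebalancing of $\sigma$, $c$, $c'$ rescues this: a base that is polynomially small in $\eta$ raised to an exponent that is polynomially small in $\eta$ always tends to $1$. Replacing $r\simeq\eta^{\sigma}$ by $r\simeq|\log\eta|^{-C}$ does not help either, since then the initial smallness is only a negative power of $|\log\eta|$ while $\theta^{k}$ is still a negative power of $|\log\eta|$, and the same collapse occurs. Reaching the logarithmic modulus $|\log\eta|^{-\zeta}$ genuinely requires a different mechanism for transferring the small-scale information on $E$ to a fixed-scale ball; the argument in \cite{MV} does not proceed by a naked scale-by-scale iteration of the three-ball inequality. Your explanation of the constraint $0<\sigma<1/n$ is also not convincing: balancing $\eta$ against $r^{\alpha}$ with $r=\eta^{\sigma}$ involves $\alpha\sigma$, not $n\sigma$.

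A secondary concern: your DeGiorgi--Nash--Moser step asserts interior H\"older continuity of $u$, but here $V\in L^{n/2}(\Omega)$ is exactly the borderline exponent, for which H\"older regularity is not guaranteed without further smallness or higher integrability. You would need either to justify this carefully under the present hypotheses or to replace the pointwise argument by one that uses only $L^2$ averages.
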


Under the assumptions and the notations of Proposition \ref{proposition1}, we apply \eqref{ii0} with $u\in \mathscr{S}_1$ and $\omega=B(z,\tau\rho_1)$ to obtain that if $\||E|^{-1/2}u\|_{L^2(E)}<\epsilon_0$ then we have the following variant of the inequality  of \cite[Theorem 1]{MV}. 
\begin{equation}\label{ms}
\|u\|_{L^2(\Omega_0)}\le C\left|\ln \||E|^{-1/2}u\|_{L^2(E)}\right|^{-t\zeta},
\end{equation}
where $C\in \mathscr{C}_{\rho_1,m,\sigma}$ with $\rho_1$, $m$ and $\sigma$ being as in Proposition \ref{proposition1},  and $t$ is as in Corollary \ref{corollary0}. Note that the original inequality in \cite[Theorem 1]{MV} corresponds to \eqref{ms} in which instead of the left-hand side term we have $\|u\|_{L^2(\Omega_\rho)}$.

\subsection{Strong unique continuation for eigenfunctions}

Let $\lambda > 0$ and $u\in H^1(\Omega)$ satisfies $(\mathscr{E}+\lambda )u=0$ in $\Omega$. Then $v=e^{\sqrt{\lambda}t}u$ is a solution of the equation
\[
(\mathscr{E}+\partial _t^2)v=0\quad  \mathrm{in}\; \Omega \times (0,1). 
\]
By applying \eqref{1} with $\Omega$, $\Omega_0$ and $\mathscr{E}$ replaced  by $\Omega \times (0,1)$, $\Omega_0\times (1/4,3/4)$) and $\mathscr{E}+\partial _t^2$, respectively, we obtain
\begin{align}
&Cr^c\left(\frac{\|v\|_{L^2(\Omega_0\times (1/4,3/4))}}{\|v\|_{L^2(\Omega\times (0,1))}}\right)^{\mathfrak{b}_r}\le \frac{\|v\|_{L^2(B((x,t),r))}}{\|v\|_{L^2(\Omega\times (0,1))}}\label{4}
\\
&\hskip 5.5cm \quad  (x,t)\in (\Omega\times(0,1))_\rho,\; 0<r<\tau \rho.\nonumber
\end{align}
Here and henceforth, $C<1$ $c>1$ are generic constants belonging to $\mathscr{C}_\rho$ and $\mathfrak{b}_r$ is the same as in Theorem \ref{theorem0} when $\Omega$ and $\Omega_0$ are replaced  by $\Omega \times (0,1)$ and $\Omega_0\times (1/4,3/4)$, respectively.

Next, assume that $\rho<\min (1/8,\rho_0)$, where $\rho_0$ is as in Theorem \ref{theorem0}. Let $(x,t)\in \Omega_\rho \times (4\rho,1-4\rho)\subset (\Omega\times(0,1))_\rho$.  As $B((x,t),r)\subset B(x,r)\times (0,1)$, \eqref{4} implies
\begin{equation}\label{4.1}
Cr^c\left(\frac{\|v\|_{L^2(\Omega_0\times (1/4,3/4))}}{\|v\|_{L^2(\Omega\times (0,1))}}\right)^{\mathfrak{b}_r}\le \frac{\|v\|_{L^2(B(x,r)\times (0,1))}}{\|v\|_{L^2(\Omega\times (0,1))}},\quad   x\in \Omega_\rho,\; 0<r<\tau \rho.
\end{equation}
Hence
\begin{equation}\label{5}
Cr^ce^{-(3\mathfrak{b}_r/4)\sqrt{\lambda}-1}\left(\frac{\|u\|_{L^2(\Omega_0)}}{\|u\|_{L^2(\Omega)}}\right)^{\mathfrak{b}_r}\le \frac{\|u\|_{L^2(B(x,r))}}{\|u\|_{L^2(\Omega)}},\quad  x\in \Omega_\rho,\; 0<r<\tau \rho.
\end{equation}

Let $\omega \Subset \Omega$. The following inequality then follows easily from \eqref{ii0}
\begin{equation}\label{6}
\|u\|_{L^2(\Omega_0)}\le Ce^{3\sqrt{\lambda}/4}\|u\|_{L^2(\Omega)}^{1-t} \|u\|_{L^2(\omega)}^t,
\end{equation}
where $C>0$ and $0<t<1$ belong to $\mathscr{C}_\omega$.

This type of estimate is well known for an eigenfunction $u$ of the Laplace-Beltrami operator on a compact Riemannian manifold $M$ without boundary or a compact Riemannian manifold $M$ with boundary and $u\in H_0^1(M)$. More precisely, we have an estimate of the form
\[
\|u\|_{L^2(M)}\le C' e^{c'\sqrt{\lambda}}\|u\|_{L^2(\omega)},
\]
where the constants $C'>0$ and $c'>0$ only depend of $\mathrm{dim}(M)$, $\omega$ and the metric on $M$ (e.g. \cite[Theorem 1.10]{LL} and the references therein).

Now suppose that $\rho_1$, given by proposition \ref{proposition1} and corresponding to $\Omega\times (0,1)$ instead of $\Omega$, is small enough so that $E\times J\subset (\Omega\times (1/4,3/4))_{\rho_1}$ and $|E|>2m$ (hence $|E||J|>m$). Moreover, assume $\|u\|_{L^2(\Omega)}=1$ and let $v=\varsigma_\lambda e^{\sqrt{\lambda}t}u$, where $\varsigma_\lambda=\|e^{\sqrt{\lambda}t}\|_{L^2((0,1))}^{-1}$. Then it follows from \eqref{ms} that
\begin{equation}\label{3.1}
C\|v\|_{L^2(\Omega_0\times (1/4,3/4))}
\le  \left|\ln\left( \||E\times J|^{-1/2}v\|_{L^2(E\times J)}\right)\right|^{-t\zeta},
\end{equation}
where $ C\in \mathscr{C}_{\rho_1,m,\sigma}$ with $\rho_1$, $m$ and $\sigma$ being as in Proposition \ref{proposition1} when $\Omega$ and $\Omega_0$ are replaced by $\Omega\times (0,1)$ and $\Omega_0\times (1/4,3/4)$, respectively  and $t$ and $\zeta$ are as in \eqref{ms} with $\Omega$ and $\Omega_0$ are replaced by $\Omega \times (0,1)$ and $\Omega_0\times (1/4,3/4)$, respectively. Therefore, we have
\begin{equation}\label{7}
C\|u\|_{L^2(\Omega_0)} \le \varsigma_\lambda e^{3\sqrt{\lambda}/4} \left|\ln\left( \varsigma_\lambda' e^{-\sqrt{\lambda}}\|(2|E|)^{-1/2}u\|_{L^2(E)}\right)\right|^{-t\zeta},
\end{equation}
where $C$, $t$ and $\zeta$ are the same as in \eqref{3.1}, and $\varsigma_\lambda'=\varsigma_\lambda\|e^{\sqrt{\lambda}}t\|_{L^2(J)}$.

Next, consider the bilinear form
\[
\mathfrak{B}(u,v)=\int_\Omega [(A\nabla u+uB)\cdot \nabla v-(W\cdot \nabla u+Vu)v]dx,\quad u,v\in H_0^1(\Omega).
\]
We can proceed as \cite[Subsection 1.3]{Ch} to prove that $\mathfrak{a}$ is continuous and coercive. Therefore, the operator $\mathfrak{A}:H_0^1(\Omega)\rightarrow H^{-1}(\Omega)$ defined by
\[
\langle \mathfrak{A}u,v\rangle =\mathfrak{B}(u,v),\quad u,v\in H_0^1(\Omega),
\]
is bounded, where $\langle \cdot ,\cdot \rangle$ is the duality pairing between $H_0^1(\Omega)$ and $H^{-1}(\Omega)$.

Under the hypothesis $W=B$, $\mathfrak{A}$ is self-adjoint and therefore, by \cite[Theorem 3.37, page 49]{Mc}, $\mathfrak{A}$ is diagonalizable, which means that the spectrum of $\mathfrak{A}$ consists of a nondecreasing  real-valued sequence $(\lambda_j)$ converging to $\infty$:
\[
-\infty <\lambda_1\le \lambda_2\le \ldots \le \lambda_j \le \ldots
\]  
and the exists an orthonormal basis $(\phi_j)$ of $L^2(\Omega)$ consisting of eigenfunctions. That is,  for all $j\ge 1$
\[
\mathfrak{B}(\phi_j,v)=\lambda_j(\phi_j|v),\quad u,v\in H_0^1(\Omega),
\] 
where $(\cdot|\cdot)$ is the usual scalar product of $L^2(\Omega)$.

Fix $\lambda^\ast\in \mathbb{R}$  so that $\lambda_1+\lambda^\ast >0$. By taking in \eqref{5}, \eqref{6} and \eqref{7}, in which we substitute $\mathscr{E}$ by $\mathscr{E}-\lambda^\ast$ and $u=\phi_j$, $j\ge 1$, we obtain, where for simplicity $\lambda_j+\lambda^\ast$ is replaced by $\lambda_j$,
\begin{align*}
&Cr^ce^{-3\sqrt{\lambda_j}\mathfrak{b}_r/4-1}\|\phi_j\|_{L^2(\Omega_0)}^{\mathfrak{b}_r}\le \|\phi_j\|_{L^2(B(x,r))},\quad  x\in \Omega_\rho,\; 0<r<\tau \rho,\; j\ge 1,
\\
&\|\phi_j\|_{L^2(\Omega_0)}\le Ce^{3\sqrt{\lambda_j}/4}\|\phi_j\|_{L^2(\omega)}^t,\quad j\ge 1,
\\
& C\|u\|_{L^2(\Omega_0)}\le \varsigma_\lambda e^{3\sqrt{\lambda_j}/4} \left|\ln\left( \varsigma_\lambda' e^{-\sqrt{\lambda}}\|(2|E|)^{-1/2}u\|_{L^2(E)}\right)\right|^{-t\zeta},\quad j\ge 1,
\end{align*}
where the above constants are the same as in \eqref{5}, \eqref{6} and \eqref{7}, respectively, and do not depend on $j$.

\subsection{Determining a metric tensor from a partial spectral boundary data}

In this subsection, $\Omega$ is of class $C^\infty$ and $g=(g_{k\ell})$ is a $C^\infty$  Riemannian metric on $\overline{\Omega}$. Denote by $(g^{k\ell})$ the inverse matrix of $(g_{k\ell})$ and recall that the Laplace-Beltrami operator associated to $g$ is given by
\[
\Delta_g=\frac{1}{\sqrt{|g|}}\sum_{k,\ell=1}^n\frac{\partial}{\partial x_k}\left(\sqrt{|g|}g^{k\ell}\frac{\partial}{\partial x_\ell}\, \cdot \right),
\]
where $|g|$ denotes the determinant of $g$.

Let $dV=\sqrt{|g|}dx^1\ldots dx^n$ be the volume form and define the unbounded operator 
\[
A_g:L^2(\Omega,dV)\rightarrow L^2(\Omega,dV)
\]  
acting as follows
\[
A_g=-\Delta _g\quad \mbox{with}\quad D\left(A_g\right)=H_0^1(\Omega)\cap H^2(\Omega).
\]
As $A_g$ is self-adjoint operator with compact resolvent, its spectrum is reduced to a sequence of eigenvalues:
\[
0 < \lambda_1^g<\lambda_2^g\le  \ldots \lambda_k^g\le \ldots \quad \mbox{and}\quad \lambda_k^g\rightarrow \infty \; \mbox{as}\; k\rightarrow \infty.
\]

Moreover, there exists $(\phi_k^g)$, $k\ge 1$, an orthonormal basis of $L^2(\Omega,dV)$ consisting of eigenfunctions, each $\phi_k^g$ being an eigenfunction for $\lambda_k^g$, that is for all $k\ge 1$, $\phi_k^g\in H_0^1(\Omega)\cap H^2(\Omega)$ and
\[
-\Delta_g \phi_k^g=\lambda_k^g\phi_k^g.
\]
Note that, according to the elliptic regularity, $\phi_k^g\in C^\infty(\overline{\Omega})$ for all $k\ge 1$.

For convenience, we use the following notation 
\[
\psi_k^g=\partial_\nu  \phi_k^g, \quad k\ge 1,
\]
where $\nu$ is the unit normal exterior vector field on $\Gamma$ with respect to $g$.

For $\chi\in C^\infty (\overline{\Omega})$ so that $\chi>0$, define the operator 
\[
A_g^\chi:L^2(\Omega,\chi^{-2}dV)\rightarrow L^2(\Omega,\chi^{-2}dV)
\]
 by
\[
A_g^\chi=\chi A_g\chi^{-1},\quad D(A_g^\chi)=D(A_g).
\]

In the following, $g_1$ and $g_2$ denote two metric tensors on $\overline{\Omega}$. We say that the operators $A_{g_1}$ and $A_{g_2}$ are gauge equivalent if $A_{g_2}=A_{g_1}^\chi$, for some $\chi \in C^\infty(\overline{\Omega})$ verifying $\chi>0$. It is important to note that if $A_{g_1}$ and $A_{g_2}$ are gauge equivalent then $(g_1^{k\ell})=(g_2^{k\ell})$ (see \cite[2.2.9]{KKL}). Moreover, if $A_{g_1}$ and $A_{g_2}$ are gauge equivalent then $(\lambda^{g_2}_k)= (\lambda^{g_1}_k)$ (\cite[(2.54)]{KKL}) and $(\phi^{g_2}_k)= (\chi\phi^{g_1}_k)$ (\cite[(2.55)]{KKL}).
 
The sequence $(\lambda^g_k,\psi^g_k)$ will be called in the following the spectral boundary data of $A_g$.

Fix $\tilde{x}\in \Gamma=\partial \Omega$ and let $\Sigma$ be a measurable subset of $\Gamma$ so that
\[
|\Sigma \cap B(r)|>0,\quad 0<r\le r_0,
\]
for some $r_0>0$, where $B(r)=B(\tilde{x},r)$. Assume in addition that $\Omega$ is chosen so that $\Gamma$ has a (smooth) connected neighborhood $N$ in $\overline{\Omega}$.

Pick $\lambda >0$ and let $u\in W^{2,\infty}(N)$ be a solution of the following BVP
\begin{equation}\label{bvp1}
(\Delta_g+\lambda )u=0\; \mathrm{in}\; N\quad u=0\; \mathrm{on}\; \Gamma.
\end{equation}

Paraphrasing the proof \cite[Theorem 2.3]{BZ}, we find that there exists $r_1\le r_0$ such that 
\begin{align}
\sup_{B(r_1/2)\cap N}&(|u|+|\nabla u|)  \label{a6}
\\
&\le Ce^{\lambda r_1}\left(\sup_{B(r_1)\cap \Sigma}|\partial_\nu u|\right)^\alpha \left(\sup_{B(r_1)\cap N}(|u|+|\nabla u|)\right)^{1-\alpha},\nonumber
\end{align}
where the constants $C>0$ and $0<\alpha <1$ only depends on $n$, $N$ and $\Sigma$.

Let $N_0\Subset B(r_1/2)\cap N$ be arbitrarily fixed. Then \eqref{a6} together with \eqref{6} imply
\begin{equation}\label{a7}
\|u\|_{L^2(N_0)}  \le Ce^{\lambda r_1+3\sqrt{\lambda}/4}\|\partial_\nu u|\|_{L^\infty(\Sigma)}^\alpha \|u\|_{W^{1,\infty}(N)}^{1-\alpha}.
\end{equation}

The partial spectral boundary  data of $A_g$  will consist of $(\lambda^g_k,\psi^g_k{_{|\Sigma}})$.

\begin{theorem}\label{theorem2}
Assume that $g_1=g_2$ in $N$. If $A_{g_1}$ and $A_{g_2}$ have the same partial boundary spectral data then $A_{g_1}$ and $A_{g_2}$ are gauge equivalent.
\end{theorem}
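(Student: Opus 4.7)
The plan is to leverage the quantitative unique continuation estimate \eqref{a7} in order to upgrade the partial boundary spectral data into the full boundary spectral data, after which a standard inverse spectral reconstruction from \cite{KKL} yields gauge equivalence.

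Fix $k \ge 1$. Because $g_1 = g_2$ on the boundary collar $N$, both eigenfunctions $\phi_k^{g_1}$ and $\phi_k^{g_2}$ satisfy the same eigenvalue equation $(\Delta_{g_1} + \lambda_k)\phi = 0$ in $N$, with $\lambda_k := \lambda_k^{g_1} = \lambda_k^{g_2}$, and both vanish on $\Gamma$. The agreement of the partial boundary spectral data on $\Sigma$ means $\psi_k^{g_1}|_\Sigma = \psi_k^{g_2}|_\Sigma$. Consequently, the difference $u_k := \phi_k^{g_1} - \phi_k^{g_2}$ solves $(\Delta_{g_1} + \lambda_k)u_k = 0$ in $N$, with $u_k = 0$ on $\Gamma$ and $\partial_\nu u_k = 0$ on $\Sigma$. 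Elliptic regularity yields $u_k \in C^\infty(\overline{\Omega}) \subset W^{2,\infty}(N)$, so the quantitative unique continuation estimate \eqref{a7} is applicable. Its right-hand side vanishes because $\|\partial_\nu u_k\|_{L^\infty(\Sigma)} = 0$, and I conclude $u_k \equiv 0$ on $N$. Thus $\phi_k^{g_1} = \phi_k^{g_2}$ on $N$ for every $k \ge 1$, and in particular $\psi_k^{g_1} = \psi_k^{g_2}$ holds on the whole of $\Gamma$.

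With full boundary spectral data agreement established, I appeal to the classical Belishev--Kurylev type reconstruction theorem for Dirichlet Laplace--Beltrami operators (see \cite{KKL}): two such operators sharing the full boundary spectral data are gauge equivalent. The hypothesis $g_1 = g_2$ on $N$ forces the resulting gauge factor $\chi$ to equal $1$ in a neighborhood of $\Gamma$ (which is consistent with $\phi_k^{g_2} = \chi \phi_k^{g_1}$ and the equality of eigenfunctions on $N$ established above), and $\chi$ is determined on the rest of $\Omega$ by the same intertwining relations $\phi_k^{g_2} = \chi \phi_k^{g_1}$.

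The main obstacle is precisely this passage from partial to full boundary spectral data; it is what makes the partial-data version genuinely delicate and distinguishes the statement from its classical counterpart. The passage is made possible by the quantitative unique continuation from a set of positive measure constructed throughout the paper, whose boundary version is the estimate \eqref{a7}. A secondary, essentially bookkeeping, issue is the ambiguity of orthonormal bases in eigenspaces of multiplicity larger than one; this is absorbed into the hypothesis that the ordered sequences $(\psi_k^{g_j}|_\Sigma)$, $j = 1,2$, match index by index, so the differences $u_k$ are unambiguously defined.
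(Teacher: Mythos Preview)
Your proof is correct and follows essentially the same route as the paper: form the difference $u_k=\phi_k^{g_1}-\phi_k^{g_2}$, observe it solves \eqref{bvp1} in $N$ with vanishing normal derivative on $\Sigma$, apply \eqref{a7} to get $u_k=0$ in $N$, deduce full boundary spectral data, and invoke \cite[Theorem 3.3]{KKL}. Your write-up simply supplies more detail (the regularity check, the remark on the gauge factor near $\Gamma$, and the comment on multiplicities) than the paper's terse version.
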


\begin{proof}
For all $k\ge 1$, we verify that $u_k=\phi_k^{g_1}-\phi_k^{g_2}$ is a solution of \eqref{bvp1} with $\lambda=\lambda_k^{g_1}$. We then apply \eqref{a7} to deduce that $u_k=0$ in $N_0$ and therefore $u=0$ in $N$ by the unique continuation for $\Delta_g+\lambda_k$. We have in particular $\psi_k^{g_1}=\psi_k^{g_2}$ and therefore $A_{g_1}$ and $A_{g_2}$ admit the same spectral boundary data. We invoke \cite[Theorem 3.3]{KKL} to complete the proof.
\end{proof}

We emphasize that when $\Sigma$ is an arbitrary non-empty open set of $\Gamma$, a logarithmic stability inequality has recently been established in \cite{IY} in the two-dimensional case. The problem of determining the potential in a Schrödinger operator or the potential and magnetic field appearing in a magnetic Schrödinger operator has been widely studied. We cite only a few recent works \cite{AS,Bel87,Bel92,BCDKS,BCY,BCY2,BD,ChS,Is,KK,KKL,KKS,Ki1,KOM,NSU,Po}.

\section{Proof of Theorem \ref{theorem0}}

The following  special case of \cite[Theorem 3]{MV} will be used in the proof of Theorem \ref{theorem0}.

\begin{theorem}\label{theorem1}
(three-ball inequality) There exist $0<\alpha<1$, $\tau <1/4$, $\rho_\ast\in \mathscr{C}$, $\rho_\ast\le \rho_0$, and $C\in \mathscr{C}$ such that for all $0<\rho<\rho_\ast$, $x\in \Omega_\rho$, $r=\tau \rho/4$ and $u\in H^1(\Omega)$ satisfying $\mathscr{E}u=0$ in $\Omega$ we have
\begin{equation}\label{tbi1}
\|u\|_{L^2(B(x,2r))}\le C\|u\|_{L^2(B(x,4r))}^{1-\alpha}\|u\|_{L^2(B(x,r))}^\alpha.
\end{equation}
\end{theorem}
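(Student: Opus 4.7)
My plan is to establish the three-ball inequality via a Carleman estimate adapted to the low regularity of the lower order coefficients, following the scheme pioneered by Garofalo–Lin and refined by Koch–Tataru and Jerison–Kenig for operators with $L^p$ potentials.

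The first step is a local reduction of the principal part to the Laplacian. Fix $x\in\Omega_\rho$ and freeze $A(x)$; using its symmetry and uniform ellipticity, a linear change of variables turns $A(x)$ into the identity. Because the coefficients $a^{k\ell}$ are Lipschitz with constant $\varkappa$, the transformed operator has principal part $\Delta + R$, where $R$ is a second order perturbation whose coefficients vanish at the origin and are $O(|y|)$; the ball radii distort by a controlled bi-Lipschitz factor depending only on $\kappa$. After this reduction it is enough to prove the three-ball inequality for a perturbed operator of the form $\Delta u = \tilde{B}\cdot\nabla u + \tilde{V}u + (\text{small second order remainder})$, where $\tilde{B}\in L^s$, $\tilde V\in L^{n/2}$ inherit bounds from $B,W,V$.

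The second step is a Carleman estimate with a singular radial weight of the form $\phi(y)=|y|^{-\beta}$ (equivalently $e^{\tau\varphi(\log|y|)}$ with $\varphi$ strictly convex), of the type
\[
\tau^3\int e^{2\tau\phi}|y|^{-2-2\tau}u^2\,dy+\tau\int e^{2\tau\phi}|y|^{-2\tau}|\nabla u|^2\,dy\le C\int e^{2\tau\phi}|y|^{2-2\tau}|\Delta u|^2\,dy,
\]
valid for $u\in C_c^\infty(B(0,4r)\setminus\{0\})$, with $C$ depending only on $n$ and $\beta$. The crucial feature is that the weights in front of $u^2$ and $|\nabla u|^2$ are strong enough that, via Hardy's inequality and the Sobolev embedding $H^1\hookrightarrow L^{2n/(n-2)}$, the terms generated by the lower order perturbations
\[
\int e^{2\tau\phi}|y|^{2-2\tau}\bigl(|\tilde B|^2|\nabla u|^2+|\tilde V|^2 u^2\bigr)dy
\]
can be absorbed into the left-hand side for $\tau$ large enough: the $Vu$ term is absorbed by Hardy–Sobolev once $V\in L^{n/2}$, and the first order terms $\tilde B\cdot\nabla u$ are controlled by Hölder with $s>n$ and a small power of $r$ coming from the annulus. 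The remainder $R\nabla^2 u$ is handled similarly, exploiting the factor $|y|$ in its coefficients together with an extra small factor $r$.

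The third step is the standard cutoff argument. Choose $\chi\in C_c^\infty(B(0,4r))$ with $\chi\equiv 1$ on $B(0,3r)$ and a second cutoff $\eta$ that equals $1$ on $B(0,2r)$ and vanishes on $B(0,r/2)$; apply the Carleman estimate to $v=\chi\eta u$ and use $\mathscr Eu=0$ to express $\Delta v$ as a sum of commutator terms supported in the annuli $\{r/2<|y|<r\}$ and $\{3r<|y|<4r\}$, plus the absorbed lower order terms. The weight $e^{2\tau\phi}$ is essentially constant on each of the three regions, so one obtains
\[
e^{2\tau\phi(2r)}\|u\|_{L^2(B_{2r})}^2\le C\bigl(e^{2\tau\phi(r/2)}\|u\|_{L^2(B_r)}^2+e^{2\tau\phi(3r)}\|u\|_{L^2(B_{4r})}^2\bigr).
\]
Optimizing $\tau\in[\tau_0,\infty)$ by balancing the two right hand terms yields the convex interpolation \eqref{tbi1} with $\alpha=(\phi(2r)-\phi(3r))/(\phi(r/2)-\phi(3r))\in(0,1)$ independent of $x$ and $\rho$; the connectedness and Lipschitz character of $\Omega$ enter only through the requirement $B(x,4r)\Subset\Omega$, which is ensured by $x\in\Omega_\rho$ and $r=\tau\rho/4$. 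I expect the principal obstacle to be the bookkeeping in the absorption step: making sure that the smallness coming from the factor $r$ on the annulus, together with the Hardy–Sobolev gain, actually dominates the potentially large norms $\|V\|_{L^{n/2}}$ and $\|B\|_{L^s}+\|W\|_{L^s}$, so that the absorption threshold $\tau_0$ depends only on the class $\mathscr C$ and not on the particular solution $u$.
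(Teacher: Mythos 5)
The paper does not actually prove this statement: Theorem \ref{theorem1} is imported verbatim as a particular case of \cite[Theorem 3]{MV}, so there is no internal argument to compare against. Your proposal is therefore an attempt to reprove the Malinnikova--Vessella result from scratch. Its overall architecture --- Carleman estimate with a singular radial weight, two cutoffs, near-constancy of the weight on the three regions, optimization in the Carleman parameter --- is the standard route and is indeed the one followed in \cite{MV}. But there is a genuine gap at the decisive technical point.

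The gap is the absorption of the critical potential. With $V$ only in $L^{n/2}(\Omega)$, the term you propose to absorb,
\[
\int e^{2\tau\phi}|y|^{2-2\tau}|\tilde V|^2u^2\,dy,
\]
cannot be controlled by $\|V\|_{L^{n/2}}$ together with the left-hand side of an $L^2$-to-$L^2$ Carleman inequality: H\"older forces you to pair $|\tilde V|^2\in L^{n/4}$ against $u^2\in L^{n/(n-4)}$, which is meaningless for $n=3,4$ and beyond the reach of $H^1\hookrightarrow L^{2n/(n-2)}$ in general, while the weighted route via $\||y|\tilde V\|_{L^n}$ requires $V\in L^n$, not $L^{n/2}$. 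The class $L^{n/2}$ is scaling critical, and this is precisely why \cite{MV} (following Jerison--Kenig and Koch--Tataru) must work with an $L^p$ Carleman estimate of the form $\|e^{\tau\phi}u\|_{L^{2n/(n-2)}}\le C\|e^{\tau\phi}\Delta u\|_{L^{2n/(n+2)}}$, so that the potential is handled by the single H\"older step $\|Vu\|_{L^{2n/(n+2)}}\le\|V\|_{L^{n/2}}\|u\|_{L^{2n/(n-2)}}$, combined with an $L^2$-level estimate for the gradient terms coming from $B,W\in L^s$, $s>n$. Squaring the equation and staying in $L^2$ throughout, as your second step does, fails at exactly this point. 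A secondary, more repairable issue: after freezing $A(x)$ the remainder $R\nabla^2u$ is a genuine second-order perturbation, and your stated Carleman inequality has no second-derivative term on the left into which it could be absorbed; one must either prove the Carleman estimate directly for the Lipschitz divergence-form operator (as in \cite{MV}) or retain an $H^2$-level term on the left-hand side.
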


\begin{proof}[Proof of Theorem \ref{theorem0}]
In this proof $C\ge 1$ and $0<\alpha<1$ are generic constants belonging to $\mathscr{C}$.

Let $\rho<\rho_\ast$, where $\rho_\ast$ is as in Theorem \ref{theorem1}, $r=\tau\rho/4$. 
Let $Q$ be the smallest closed cube containing $\overline{\Omega}$. Then we have $|Q|=d^n$. We divide $Q$ into $m_\rho$ closed sub-cubes. Let $(Q_j)_{1\le j\le m_\rho}$ denotes the family of these cubes. Note that, for each $j$, $|Q_j|< (\rho/\sqrt{n})^n$ and therefore $Q_j$ is contained in a ball $B_j$ of radius $\rho/2$. Define
\[
I_\rho=\{j \in \{1,\ldots,m_r\};\; Q_j\cap \overline{\Omega}^{\rho}\ne \emptyset\}\quad \mathrm{and}\quad Q^\rho=\bigcup_{j\in I_r}Q_j.
\]
In particular, $Q^\rho\subset \Omega^{3\rho/4}$ and since $\overline{\Omega}^{\rho}$ is connected then so is $Q^\rho$.

It follows from Lemma \ref{lemGeo} in Appendix \ref{GL} that there exists  a continuous path $\psi:[0,1]\rightarrow Q^\rho$ joining $x$ to $y$ whose length, denoted hereinafter by $\ell(\psi)$, does not exceed $\rho m_\rho$.

Let $t_0=0$ and define the sequence $(t_k)$ as follows
\[
t_{k+1}=\inf\{t\in [t_k,1];\; \psi(t)\not\in B(\psi(t_k),\rho)\},\quad k\ge 0.
\]
Then $|\psi(t_{k+1})-\psi(t_k)|=\rho$. Thus, there exists a positive integer $N_\rho$ so that $\psi(1)\in B(\psi(t_{N_\rho}),\rho)$. As $\rho N_\rho\le \ell(\psi)\le \rho m_\rho$, we obtain $N_\rho\le m_\rho$.

Let $x_j=\psi(t_k)$, $j=0, \ldots,N_\rho$ and $x_{N_\rho+1}=y$. Clearly, $B(x_j,3\rho)\subset \Omega$, $j=0, \ldots,N_\rho+1$, and  $B(x_{j+1},\rho)\subset B(x_j,2\rho)$, $j=0, \ldots,N_\rho$.

We verify that
\[
N:=N_\rho\le N_0:=\mathfrak{c}/r^n.
\]
Here and henceforth, $\mathfrak{c}=\mathfrak{c}(n,d,\rho_0)>0$ is a generic constant.

Let $u\in \mathscr{S}_0$. It follows from \eqref{tbi1}
\begin{equation}\label{est1}
\|u\|_{L^2(B(x_j,2r ))}\le C\|u\|_{L^2(\Omega)}^{1-\alpha}\|u\|_{L^2(B(x_j,r))}^\alpha,\quad 0\le j\le N+1.
\end{equation}

Let $I_j=\|v\|_{L^2(B(x_j,r ))}$, $0\le j\le N+1$, where $v=u/ \|u\|_{L^2(\Omega)}$. Since $B(x_{j+1},r )\subset B(x_j,2r)$, $1\le j\le N$, estimate \eqref{est1} implies
\begin{equation}\label{est2}
I_{j+1}\le C I_j^\alpha,\quad 0\le j\le N.
\end{equation}
As $C^{1+\alpha+\ldots +\alpha^{N+1}}\le C^{1/(1-\alpha)}$, an induction argument shows that \eqref{est2} yields
\begin{equation}\label{est3}
I_{N+1}\le C^{1/(1-\alpha)} I_0^{\alpha^{N+1}} ,
\end{equation}
which is combined with $I_0\le 1$, gives
\begin{equation}\label{est4}
 \|v\|_{L^2(B(y,r ))}\le C\|v\|_{L^2(B(x,r))}^{\alpha^{\mathfrak{c}/r^n}},
\end{equation}
where we used that $N+1\le \mathfrak{c}/r^n$.

Replacing $\rho_\ast$ by $2\rho_\ast$, we can rewrite \eqref{est4}  in the following form
\begin{equation}\label{est5}
 \|v\|_{L^2(B(y,\tau\rho/8))}\le C\|v\|_{L^2(B(x,\tau\rho/8))}^{\alpha^{\mathfrak{c}/\rho^n}},\quad x,y\in \Omega_{\rho/2}.
\end{equation}

Furthermore, by reducing $\rho_\ast$ if necessary, we assume that $\Omega_{(1+\tau/16)\rho/2}\Supset \Omega_0$. Let $M:=\|v\|_{L^2(\Omega_0)}=\|u\|_{L^2(\Omega_0)}/\|u\|_{L^2(\Omega)}$ ($<1$). Then we have
\[
M\le \|v\|_{L^2(\Omega_{(1+\tau/16)\rho/2})}^2=\sum_j\|v\|_{L^2(Q_j\cap \Omega_{(1+\tau/16)\rho/2})}^2.
\]
where $Q_j$ are defined as before so that $|Q_j|=[\tau\rho/(8\sqrt{n})]^n$. Hence, there exists $Q_k$ so that
\[
\|v\|_{L^2(Q_k)}\ge \mathfrak{c}M\rho^{-n},
\]
As $Q_k\subset B(z,\tau\rho/8)$, for some $z\in \Omega_{\rho/2}$, we obtain
\begin{equation}\label{est6}
\|v\|_{B(z,\tau\rho/8)}\ge   \mathfrak{c}M\rho^{-n}.
\end{equation}

In the rest of this proof $C_\rho\in \mathscr{C}_\rho$ is a generic constant. Let $\mathfrak{a}=1/\alpha^{\mathfrak{c}/\rho^n}$. In light of \eqref{est5}, we obtain from \eqref{est6}

\begin{equation}\label{est7}
C_\rho M^{\mathfrak{a}}\le \|v\|_{L^2(B(x,\tau\rho/8))},\quad x\in \Omega_{\rho/2}.
\end{equation}

Let
\[
\mathscr{Q}_\rho (v)=\max_{x\in \Omega_{\rho/2}}\left(\frac{\|v\|_{L^2(B(x,\rho))}}{ \|v\|_{L^2(B(x,2\tau\rho))}}\right)^\beta.
\]
Then \eqref{est7} implies
\[
\mathscr{Q}_\rho (v)\le C_\rho M^{-\beta\mathfrak{a}}.
\]

From now on, we suppose that $C_\rho>1$. By applying the following doubling inequality (\cite[Proposition 2]{MV}) 
\[
\|v\|_{L^2(B(x,2s))}\le C_\rho \mathscr{Q}_\rho (v)\|v\|_{L^2(B(x,s))},\quad x\in \Omega_\rho,\; 0<s<\tau \rho,
\]
 we deduce
\begin{equation}\label{est8}
\|v\|_{L^2(B(x,2s))}\le C_\rho M^{-\beta\mathfrak{a}}\|v\|_{L^2(B(x,s))},\quad x\in \Omega_\rho,\; 0<s<\tau \rho.
\end{equation}

Let $m$ be the nonnegative  integer satisfying $2^{m-1}s<\tau \rho$ and $2^ms\ge \tau \rho$. Iterating \eqref{est8} $(m-1)$ 
times and using that $B(x,\tau \rho)\subset B(x,2^ms)$, we obtain
\begin{equation}\label{est9}
\|v\|_{L^2(B(x,\tau \rho))}\le C_\rho^m M^{-m\beta\mathfrak{a}}\|v\|_{L^2(B(x,s))},\quad x\in \Omega_\rho,\; 0<s<\tau \rho.
\end{equation}
By noting that $\Omega_{\rho}=\Omega_{(2\rho)/2}$, an combination of \eqref{est7} and \eqref{est9} imply
\begin{equation}\label{est9}
C_\rho\le C_\rho^m M^{-m\beta\mathfrak{a}+\mathfrak{a}}\|v\|_{L^2(B(x,s))},\quad x\in \Omega_\rho,\; 0<s<\tau \rho,
\end{equation}
Replacing $\mathfrak{a}+\mathfrak{a}\beta$ by $\mathfrak{a}$ and using that $C_\rho>1$ and $M^{-1}>1$, we obtain from \eqref{est9}
\begin{equation}\label{est10}
C_\rho^{-1}\le  s^{-c_\rho}M^{-(\ln (\tau\rho/s)+\mathfrak{a}}\|v\|_{L^2(B(x,s))},\quad x\in \Omega_\rho,\; 0<s<\tau \rho,
\end{equation}
where $c_\rho>1$ belongs to $\mathscr{C}_\rho$.
In other words, we have
\[
C_\rho^{-1}\|u\|_{L^2(\Omega_0)}^{\ln (\tau\rho/s)+\mathfrak{a}}\le  s^{-c_\rho}\|u\|_{L^2(\Omega)}^{\ln (\tau\rho/s)+\mathfrak{a}-1}\|u\|_{L^2(B(x,s))},\quad x\in \Omega_{\rho},\; 0<s<\tau \rho.
\]
 The proof is then complete.
\end{proof}

\appendix

\section{Geometric lemma}\label{GL}

\begin{lemma}\label{lemGeo}
Let $Q=\cup_{j \in J} Q_j$ be the connected union of $\ell \, (=|J|)$ closed cubes with edges of length $r/\sqrt{n}$, for some $r>0$. All $x,y\in Q$ can be connected by a broken line of length less than or equal to $\ell r$.
\end{lemma}

\begin{proof}
Let $V$ denotes the union of the vertices of $\{Q_j;\; j\in J\}$. Let $x,y\in Q$ be with $x\in Q_j$ and $y\in Q_k$, for some $j,k\in J$. Let $x_v\in V\cap Q_j$ and $y_v\in V\cap Q_k$ be fixed arbitrarily. There exists in $Q$ a broken line connecting $x_v$ to $y_v$ consisting of $m$ segments $[v_j,v_{j+1}]$ such that $m\le \ell-2$, $v_0=x_v$ and $v_m=y_v$ (note that $]v_j,v_{j+1}[$ is contained inside a cube, on a face or an edge). Since $|[x,x_v]|\le r$, $|[y-y_v]|\le r$ and $|[v_j,v_{j+1}]|\le r$, $j=0,\ldots m-1$, we conclude that any two points of $Q$ can be connected by a broken line of length less than or equal to $\ell r$.
\end{proof}

\vskip .5cm
\end{document}